\begin{document}

\def\cM{\mathcal{M}}

\newtheorem{theorem}{Theorem}[section]
\newtheorem{lemma}[theorem]{Lemma}
\newtheorem{proposition}[theorem]{Proposition}





\title{On the relative coexistence of fixed points and period-two solutions near border-collision bifurcations.}
\author{D.J.W.~Simpson\\\\
Institute of Fundamental Sciences\\
Massey University\\
Palmerston North\\
New Zealand}
\maketitle

\begin{abstract}

At a border-collision bifurcation a fixed point of a piecewise-smooth map
intersects a surface where the functional form of the map changes.
Near a generic border-collision bifurcation
there are two fixed points,
each of which exists on one side of the bifurcation.
A simple eigenvalue condition indicates whether the fixed points exist
on different sides of the bifurcation (this case can be interpreted as the persistence of a single fixed point),
or on the same side of the bifurcation (in which case the bifurcation is akin to a saddle-node bifurcation).
A similar eigenvalue condition indicates whether or not there exists a period-two solution on one side of the bifurcation.
Previously these conditions have been combined to
obtain five distinct scenarios
for the existence and relative coexistence of fixed points and period-two solutions near border-collision bifurcations.
In this Letter, it is shown that one of these scenarios,
namely that two fixed points exist on one side of the bifurcation
and a period-two solution exists on the other side of the bifurcation, cannot occur.
The remaining four scenarios are feasible.
Therefore there are exactly four distinct scenarios
for fixed points and period-two solutions near border-collision bifurcations.

\end{abstract}





\section{Introduction}
\label{sec:intro}
\setcounter{equation}{0}


A piecewise-smooth map on $\cM \subset \mathbb{R}^N$ is a discrete-time dynamical system
\begin{equation}
X_{i+1} = F^j(X_i) \;, \qquad X_i \in \cM_j \;,
\label{eq:genPWSMap}
\end{equation}
where the regions $\cM_j$ form a partition of the domain $\cM$, and each $F^j : \cM_j \to \cM$ is a smooth function.
The boundaries of the $\cM_j$, termed switching manifolds, are assumed to be either smooth or piecewise-smooth surfaces.
Piecewise-smooth maps are used to model oscillatory dynamics in systems involving abrupt events,
such as mechanical systems with impacts \cite{LeNi04}, 
power electronics with switching events \cite{ZhMo03},
and economics systems with non-negativity conditions or optimisation \cite{PuSu06}.

As parameters are varied,
a fixed point of (\ref{eq:genPWSMap}) may intersect a switching manifold.
If, near the intersection, the switching manifold is smooth,
(\ref{eq:genPWSMap}) is continuous,
and the derivatives ${\rm D}_X F^j$ are bounded,
then the intersection is known as a {\em border-collision bifurcation} \cite{DiBu08}. 
Dynamics near a border-collision bifurcation of (\ref{eq:genPWSMap})
are well-approximated by a piecewise-linear, continuous map,
which can be put in the form 
\begin{equation}
x_{i+1} = \left\{ \begin{array}{lc}
A_L x_i + b \mu \;, & s_i \le 0 \\
A_R x_i + b \mu \;, & s_i \ge 0
\end{array} \right. \;,
\label{eq:f}
\end{equation}
where, throughout this Letter, $s = e_1^{\sf T} x$
denotes the first component of $x \in \mathbb{R}^N$.
In (\ref{eq:f}),
$A_L$ and $A_R$ are real-valued $N \times N$ matrices,
$b \in \mathbb{R}^N$,
and $\mu \in \mathbb{R}$ is the primary bifurcation parameter:
the border-collision bifurcation occurs at $x=0$ when $\mu=0$.
The requirement that (\ref{eq:f}) is continuous implies
\begin{equation}
A_R = A_L + \xi e_1^{\sf T} \;,
\label{eq:xi}
\end{equation}
for some $\xi \in \mathbb{R}^N$.

A fixed point of (\ref{eq:f}) must be a fixed point of one of the two half-maps of (\ref{eq:f}):
\begin{equation}
f^L(x_i) = A_L x_i + b \mu \;, \qquad
f^R(x_i) = A_R x_i + b \mu \;.
\label{eq:fJ}
\end{equation}
As long as $1$ is not an eigenvalue of $A_L$ and $A_R$, $f^L$ and $f^R$ have unique fixed points, 
\begin{equation}
x^L = (I-A_L)^{-1} b \mu \;, \qquad
x^R = (I-A_R)^{-1} b \mu \;.
\label{eq:xJ}
\end{equation}
The point $x^L$ is a fixed point of (\ref{eq:f}), and said to be admissible, if $s^L \le 0$. 
Similarly, $x^R$ is admissible if $s^R \ge 0$.
Since $x^L$ and $x^R$ are linear functions of $\mu$,
generically $x^L$ and $x^R$ are each admissible for exactly one sign of $\mu$.
In general, for the purposes of characterising the behaviour of (\ref{eq:f}),
it suffices to consider only the sign of $\mu$,
because the structure of the dynamics of (\ref{eq:f}) is independent to the magnitude of $\mu$.

Other invariant sets may be created in border-collision bifurcations,
such as periodic solutions, invariant circles, and chaotic sets \cite{DiBu08,NuYo92,BaGr99,ZhMo06b,SuGa08,Si10},
as well as exotic dynamics such as multi-dimensional attractors \cite{GlWo11},
and infinitely many coexisting attractors \cite{Si14}.
This Letter concerns only fixed points and period-two solutions.
Period-two solutions were first explored by Mark Feigin in the 1970's \cite{Fe70,Fe78},
and were described more recently in \cite{DiBu08,DiFe99}.
The creation of a period-two solution in a border-collision bifurcation
has different scaling properties than a period-doubling bifurcation,
and such differences can have important physical interpretations \cite{ZhSc07}.

In generic situations, (\ref{eq:f}) either has no period-two solution for either sign of $\mu$,
or has an $LR$-cycle (a period-two solution consisting of one point on each side of $s=0$)
for exactly one sign of $\mu$ \cite{Fe70}.
In \cite{Fe78}, Feigin showed
that the relative coexistence of the fixed points $x^L$ and $x^R$
is determined by a simple condition on the eigenvalues of $A_L$ and $A_R$,
and that a similar condition indicates
whether or not an $LR$-cycle exists for one sign of $\mu$.
This is one of the most far-reaching results in the bifurcation theory of nonsmooth dynamical systems,
because it applies to maps of any number of dimensions.
Centre manifold analysis, which is the key tool for dimension reduction,
requires local differentiability and so usually cannot be applied to bifurcations specific to nonsmooth dynamical systems,
such as border-collision bifurcations \cite{GlJe12}.

By directly combining the two generic cases for the nature of both fixed points and period-two solutions,
it appears that border-collision bifurcations can be categorised into five basic scenarios.
In the absence of an $LR$-cycle there are two scenarios:
either $x^L$ and $x^R$ are admissible for different signs of $\mu$, Fig.~\ref{fig:fixedPointsTwoCycles}-A,
or $x^L$ and $x^R$ are admissible for the same sign of $\mu$, Fig.~\ref{fig:fixedPointsTwoCycles}-B.
If there exists an $LR$-cycle, and $x^L$ and $x^R$ are admissible for different signs of $\mu$,
then, trivially, the $LR$-cycle coexists with exactly one fixed point, Fig.~\ref{fig:fixedPointsTwoCycles}-C.
Finally, if there exists an $LR$-cycle, and $x^L$ and $x^R$ are admissible for the same sign of $\mu$,
it appears that there are two scenarios.
The $LR$-cycle could either coexist with $x^L$ and $x^R$, as in Fig.~\ref{fig:fixedPointsTwoCycles}-D,
or coexist with neither $x^L$ or $x^R$.
In \cite{Fe78}, Feigin noted that the latter scenario is not possible
in one-dimension ($N=1$) in view of Sharkovskii's theorem \cite{Sh95}.
Feigin further stated that this scenario is not possible for $N=2$ (but did not provide a proof),
and conjectured that the scenario is not possible for any $N \in \mathbb{Z}^+$.
The purpose of this Letter is to prove this conjecture.

Each of the four scenarios of Fig.~\ref{fig:fixedPointsTwoCycles}
is possible for (\ref{eq:f}) in any number of dimensions.
In Fig.~\ref{fig:fixedPointsTwoCycles} the scenarios are illustrated for (\ref{eq:f}) with $N=1$, for which (\ref{eq:f}) is written as
\begin{equation}
x_{i+1} = \left\{ \begin{array}{lc}
a_L x_i + \mu \;, & x_i \le 0 \\
a_R x_i + \mu \;, & x_i \ge 0
\end{array} \right. \;,
\label{eq:f1d}
\end{equation}
where $a_L, a_R \in \mathbb{R}$.

The remainder of this Letter is organised as follows.
Calculations for fixed points and period-two solutions of (\ref{eq:f})
are given in \S\ref{sec:fixedPoints} and \S\ref{sec:periodTwo}, respectively.
The basic border-collision bifurcation scenarios
formed by considering all generic possibilities for fixed points and
period-two solutions are described in \S\ref{sec:classification}.
In \S\ref{sec:impossibility} it is proved that
a non-degenerate period-two solution of (\ref{eq:f}) must coexist with a fixed point.
Finally, \S\ref{sec:conc} presents a brief summary and outlook. 

\begin{figure}[t!]
\begin{center}
\setlength{\unitlength}{1cm}
\begin{picture}(15,10)
\put(0,0){\includegraphics[height=10cm]{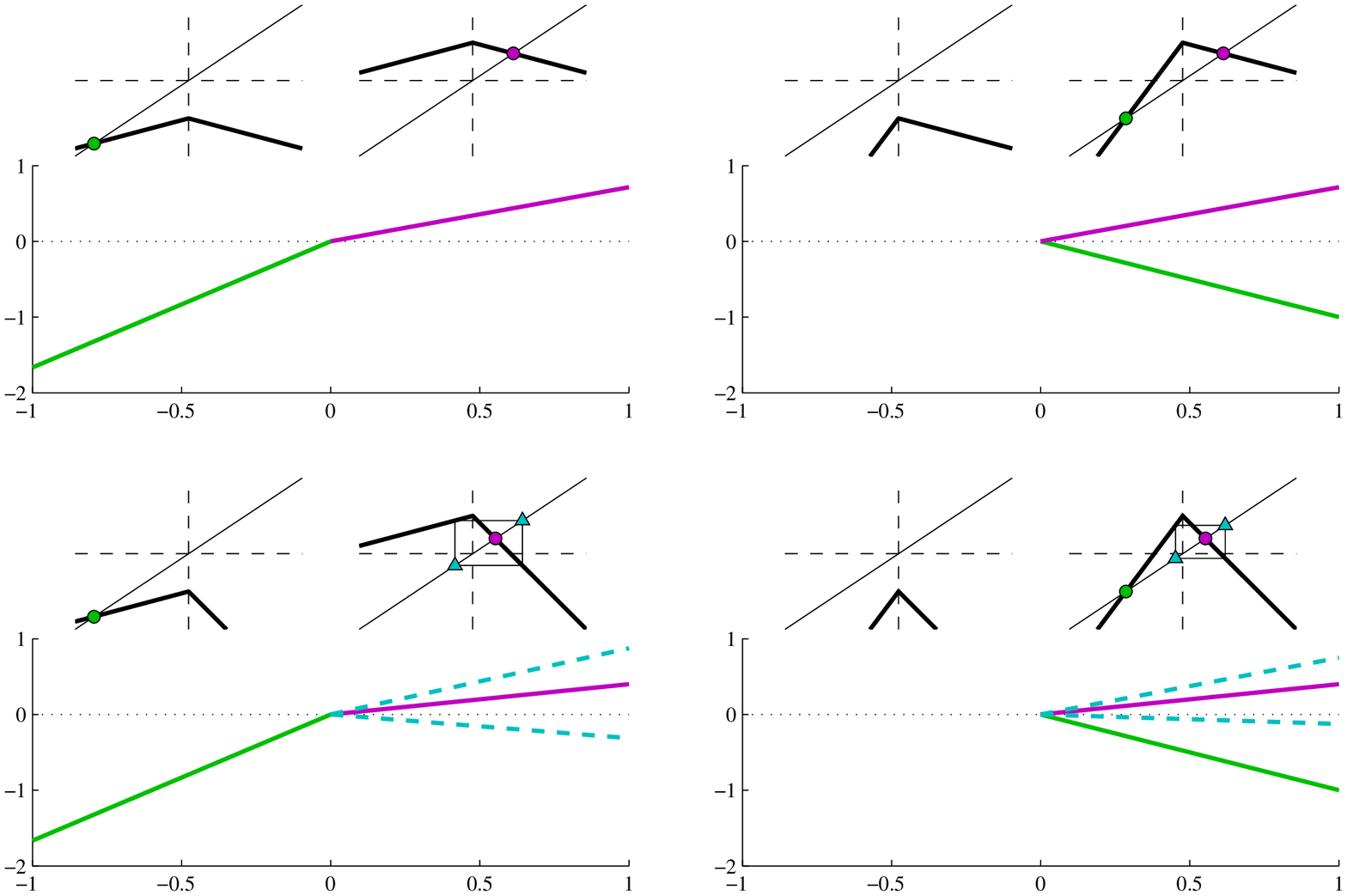}}
\put(.8,9.5){\large \sf \bfseries A}
\put(3.77,5){\small $\mu$}
\put(.3,6.66){\small $x$}
\put(2.07,9.56){\footnotesize $\mu<0$}
\put(5.09,9.56){\footnotesize $\mu>0$}
\put(8.3,9.5){\large \sf \bfseries B}
\put(11.27,5){\small $\mu$}
\put(7.8,6.66){\small $x$}
\put(9.57,9.56){\footnotesize $\mu<0$}
\put(12.59,9.56){\footnotesize $\mu>0$}
\put(.8,4.5){\large \sf \bfseries C}
\put(3.77,0){\small $\mu$}
\put(.3,1.66){\small $x$}
\put(2.07,4.56){\footnotesize $\mu<0$}
\put(5.09,4.56){\footnotesize $\mu>0$}
\put(8.3,4.5){\large \sf \bfseries D}
\put(11.27,0){\small $\mu$}
\put(7.8,1.66){\small $x$}
\put(9.57,4.56){\footnotesize $\mu<0$}
\put(12.59,4.56){\footnotesize $\mu>0$}
\end{picture}
\caption{
Bifurcation diagrams
showing the four scenarios for the existence and relative coexistence of fixed points
and period-two solutions near generic border-collision bifurcations of piecewise-smooth maps.
In panels A and C, the two fixed points $x^L$ and $x^R$ (\ref{eq:xJ})
are admissible for different signs of $\mu$ (persistence);
in panels B and D, $x^L$ and $x^R$ are admissible for the same sign of $\mu$ (nonsmooth-fold).
In panels A and B, there is no period-two solution;
in panels C and D, an $LR$-cycle $\{ x^{LR} ,\, x^{RL} \}$ exists for one sign of $\mu$ (indicated by dashed lines).
By Theorem \ref{th:caseFive}, the $LR$-cycle must coexist with at least one fixed point.
Each bifurcation diagram is illustrated for the one-dimensional map (\ref{eq:f1d})
(the insets are graphs of $x_{i+1}$ versus $x_i$).
The specific parameter values are
$(a_L,a_R) = (0.4,-0.4)$ in panel A,
$(a_L,a_R) = (2,-0.4)$ in panel B,
$(a_L,a_R) = (0.4,-1.5)$ in panel C,
and $(a_L,a_R) = (2,-1.5)$ in panel D.
\label{fig:fixedPointsTwoCycles}
}
\end{center}
\end{figure}

\section{Fixed points}
\label{sec:fixedPoints}
\setcounter{equation}{0}

In order to compare the values of $s^L$ and $s^R$
(the first components of $x^L$ and $x^R$ (\ref{eq:xJ})), we let
\begin{equation}
\varrho^{\sf T} = e_1^{\sf T} {\rm adj}(I-A_L) \;,
\label{eq:varrho}
\end{equation}
where ${\rm adj}(A)$ denotes the {\em adjugate} of a square matrix $A$.
Recall, if $A$ is nonsingular, then $A^{-1} = \frac{{\rm adj}(A)}{\det(A)}$.
Thus, by (\ref{eq:xJ}) with $J=L$, we have
\begin{equation}
s^L = \frac{\varrho^{\sf T} b}{\det(I-A_L)} \mu \;.
\label{eq:sL}
\end{equation}
Since $A_L$ and $A_R$ differ in only their first columns (\ref{eq:xi}),
${\rm adj}(I-A_L)$ and ${\rm adj}(I-A_R)$ have the same first row \cite{Si10,DiFe99},
that is, $e_1^{\sf T} {\rm adj}(I-A_R) = \varrho^{\sf T}$.
Thus, by (\ref{eq:xJ}) with $J=R$, we have
\begin{equation}
s^R = \frac{\varrho^{\sf T} b}{\det(I-A_R)} \mu \;.
\label{eq:sR}
\end{equation}
By (\ref{eq:sL}) and (\ref{eq:sR}), if $\varrho^{\sf T} b = 0$, then $s^L = s^R = 0$ for all $\mu$.
In this instance the fixed points do not move away from the switching manifold as $\mu$ is varied from zero,
which runs counter to our notion of a border-collision bifurcation.
For this reason, $\varrho^{\sf T} b \ne 0$ is a non-degeneracy condition for the border-collision bifurcation of (\ref{eq:f}) at $\mu=0$.

Following \cite{Fe78}, for $J = L,R$,
we let $\sigma_J^+$ denote the number of real eigenvalues of $A_J$ that are greater than $1$.
If $1$ is not an eigenvalue of $A_J$, then
\begin{equation}
{\rm sgn} \left( \det(I-A_J) \right) = (-1)^{\sigma_J^+} \;, \quad J = L,R \;.
\label{eq:sigmaJplus}
\end{equation}
By combining (\ref{eq:sL}), (\ref{eq:sR}) and (\ref{eq:sigmaJplus}), we obtain the formula
\begin{equation}
{\rm sgn} \left( s^J \right) = (-1)^{\sigma_J^+} \,{\rm sgn} \left( \varrho^{\sf T} b \mu \right) \;, \quad J = L,R \;.
\label{eq:sgnsJ}
\end{equation}
Recall, $x^L$ is admissible if $s^L \le 0$, and $x^R$ is admissible if $s^R \ge 0$.
Therefore, if $\sigma_L^+ + \sigma_R^+$ is an even number,
then $(-1)^{\sigma_L^+} = (-1)^{\sigma_R^+}$,
and hence by (\ref{eq:sgnsJ}), ${\rm sgn} \left( s^L \right) = {\rm sgn} \left( s^R \right)$.
Thus in this case $x^L$ and $x^R$ are admissible for different signs of $\mu$ ({\em persistence} of a fixed point). 
Alternatively if $\sigma_L^+ + \sigma_R^+$ is an odd number,
then $x^L$ and $x^R$ are admissible for the same sign of $\mu$ (a {\em nonsmooth-fold}). 


\section{Period-two solutions}
\label{sec:periodTwo}
\setcounter{equation}{0}

Let us first consider a period-two solution of (\ref{eq:f}) consisting of two points with $s < 0$.
Points of this solution are fixed points of
$\left( f^L \circ f^L \right)(x_i) = A_L^2 x_i + (I+A_L) b \mu$.
If $A_L$ does not have an eigenvalue of $1$ or $-1$, as is generically the case,
this period-two solution is unique, and therefore must coincide with $x^L$.
Hence the period-two solution is really a fixed point, and we do not need to consider it further.
We can similarly dismiss period-two solutions of (\ref{eq:f}) consisting of two points with $s > 0$.
Therefore it remains to consider an $LR$-cycle $\{ x^{LR} ,\, x^{RL} \}$,
where $x^{RL} = f^L(x^{LR})$ and $x^{LR} = f^R(x^{RL})$.

Expressions for the first components of $x^{LR}$ and $x^{RL}$ are given by the following lemma.
Lemma \ref{le:sLRsRL} is a special case of a result for general periodic solutions of (\ref{eq:f}) derived in \cite{Si10,SiMe10},
and the reader is referred to these sources for a proof.

\begin{lemma}
If $1$ is not an eigenvalue of $A_R A_L$, then the $LR$-cycle is unique
(but not necessarily admissible) and
\begin{equation}
s^{LR} = \frac{\det(I+A_R) \varrho^{\sf T} b}{\det(I-A_R A_L)} \mu \;, \qquad
s^{RL} = \frac{\det(I+A_L) \varrho^{\sf T} b}{\det(I-A_R A_L)} \mu \;.
\label{eq:sLRsRL}
\end{equation}
\label{le:sLRsRL}
\end{lemma}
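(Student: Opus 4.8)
The plan is to convert the two symbolic conditions defining the $LR$-cycle into a single linear system, solve it to obtain both uniqueness and closed forms for $x^{LR}$ and $x^{RL}$, and then reduce the first-component formulas (\ref{eq:sLRsRL}) to one determinant identity that follows directly from the rank-one relation (\ref{eq:xi}).

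First I would encode the cycle by $x^{RL} = f^L(x^{LR}) = A_L x^{LR} + b \mu$ and $x^{LR} = f^R(x^{RL}) = A_R x^{RL} + b \mu$. Substituting one into the other gives
\[
(I - A_R A_L)\, x^{LR} = (I + A_R) b \mu , \qquad (I - A_L A_R)\, x^{RL} = (I + A_L) b \mu .
\]
Since $\det(I - A_L A_R) = \det(I - A_R A_L)$, the hypothesis that $1$ is not an eigenvalue of $A_R A_L$ makes both coefficient matrices invertible, so each system has a unique solution; this is precisely the claimed uniqueness (no admissibility, i.e.\ no sign condition on $s^{LR}$ or $s^{RL}$, is asserted). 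Writing $(I - A_R A_L)^{-1} = {\rm adj}(I - A_R A_L) / \det(I - A_R A_L)$ and taking first components reduces the claimed formula for $s^{LR}$ to the identity
\[
e_1^{\sf T} {\rm adj}(I - A_R A_L)(I + A_R) b = \det(I + A_R)\, \varrho^{\sf T} b ,
\]
with the formula for $s^{RL}$ following after interchanging $L$ and $R$.

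The content of the lemma is therefore this determinant identity, and it is the one step I expect to look like an obstacle. The tool I would use is the elementary cofactor fact that, writing $M\langle v \rangle$ for the matrix obtained from $M$ by replacing its first column with a vector $v$, one has $e_1^{\sf T} {\rm adj}(M)\, v = \det(M\langle v \rangle)$; in particular $\varrho^{\sf T} b = \det((I - A_L)\langle b \rangle)$, as is already implicit in \S\ref{sec:fixedPoints}. The left-hand side above is thus $\det((I - A_R A_L)\langle (I + A_R) b \rangle)$. The observation that dissolves the difficulty is that (\ref{eq:xi}) gives
\[
(I + A_R)(I - A_L) = (I - A_R A_L) + \xi e_1^{\sf T} ,
\]
so $(I + A_R)(I - A_L)$ and $I - A_R A_L$ agree in every column but the first. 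Consequently the matrix $(I - A_R A_L)\langle (I + A_R) b \rangle$ coincides, column by column, with the product of $I + A_R$ and the matrix $(I - A_L)\langle b \rangle$ — the first columns agree by construction and the remaining columns by the displayed identity. Taking determinants and using multiplicativity then yields exactly $\det(I + A_R)\, \varrho^{\sf T} b$. The $RL$ case is identical with $L$ and $R$ swapped, additionally invoking the fact recalled in \S\ref{sec:fixedPoints} that ${\rm adj}(I - A_L)$ and ${\rm adj}(I - A_R)$ have the common first row $\varrho^{\sf T}$.

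The only genuine ingredients are the column-replacement identity and the one-line algebra (\ref{eq:xi}) behind the displayed matrix equation; everything else is bookkeeping, and the hypotheses enter solely through invertibility of $I - A_R A_L$. I expect any real difficulty to be notational rather than mathematical: one must track carefully that $I - A_R A_L$ and $(I + A_R)(I - A_L)$ differ only in the first column, and that the first column of $(I - A_L)\langle b \rangle$ is carried by $I + A_R$ to the required vector $(I + A_R) b$. As the excerpt observes, the lemma is a special case of the general periodic-orbit formulas of \cite{Si10,SiMe10}, so a shorter alternative is simply to quote their bordered-determinant computation; the self-contained route above is the one I would carry out.
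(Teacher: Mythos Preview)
Your argument is correct. The paper itself does not supply a proof of Lemma~\ref{le:sLRsRL}; it defers entirely to \cite{Si10,SiMe10}, so there is no in-paper proof to compare against. What you have written is a clean self-contained derivation: the linear systems $(I-A_R A_L)x^{LR}=(I+A_R)b\mu$ and $(I-A_L A_R)x^{RL}=(I+A_L)b\mu$ give uniqueness immediately, and your key identity $(I+A_R)(I-A_L)=I-A_R A_L+\xi e_1^{\sf T}$ together with the first-column cofactor expansion $e_1^{\sf T}{\rm adj}(M)v=\det(M\langle v\rangle)$ reduces the first-component formulas to $\det\bigl((I-A_R A_L)\langle(I+A_R)b\rangle\bigr)=\det(I+A_R)\det\bigl((I-A_L)\langle b\rangle\bigr)$, which you verify column by column. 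The $RL$ case uses the companion identity $(I+A_L)(I-A_R)=I-A_L A_R-\xi e_1^{\sf T}$ and the fact, already recorded in \S\ref{sec:fixedPoints}, that $e_1^{\sf T}{\rm adj}(I-A_R)=\varrho^{\sf T}$; you should state the sign explicitly there, but it is immaterial since only agreement off the first column is needed. Your route has the virtue of keeping the argument internal to the paper's toolkit (the rank-one relation~(\ref{eq:xi}) and the adjugate manipulations of \S\ref{sec:fixedPoints}), whereas the cited references obtain (\ref{eq:sLRsRL}) as the period-two instance of a general bordered-determinant formula for arbitrary symbolic periodic orbits.
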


As in \cite{Fe78}, we let $\sigma_J^-$ denote the number of real eigenvalues of $A_J$ that are less than $-1$.
If $-1$ is not an eigenvalue of $A_J$, then
\begin{equation}
{\rm sgn} \left( \det(I+A_J) \right) = (-1)^{\sigma_J^-} \;, \quad J = L,R \;.
\label{eq:sigmaJminus}
\end{equation}
We also let $\sigma_{LR}^+$ denote the number of real eigenvalues of $A_R A_L$
(or equivalently of $A_L A_R$) that are greater than $1$.
If $1$ is not an eigenvalue of $A_R A_L$, then
\begin{equation}
{\rm sgn} \left( \det(I-A_R A_L) \right) = (-1)^{\sigma_{LR}^+} \;.
\label{eq:sigmaLRplus}
\end{equation}
By (\ref{eq:sLRsRL}), (\ref{eq:sigmaJminus}) and (\ref{eq:sigmaLRplus}), we have
\begin{equation}
{\rm sgn} \left( s^{LR} \right) = (-1)^{\sigma_R^- + \sigma_{LR}^+}
\,{\rm sgn} \left( \varrho^{\sf T} b \mu \right) \;, \qquad
{\rm sgn} \left( s^{RL} \right) = (-1)^{\sigma_L^- + \sigma_{LR}^+}
\,{\rm sgn} \left( \varrho^{\sf T} b \mu \right) \;,
\label{eq:sgnsLRsRL}
\end{equation}
The $LR$-cycle is admissible if $s^{LR} \le 0$ and $s^{RL} \ge 0$.
Therefore by (\ref{eq:sgnsLRsRL}), if $\sigma_L^- + \sigma_R^-$ is even,
then ${\rm sgn} \left( s^{LR} \right) = {\rm sgn} \left( s^{RL} \right)$
and so the $LR$-cycle is not admissible for all $\mu \ne 0$.
Alternatively if $\sigma_L^- + \sigma_R^-$ is odd,
then the $LR$-cycle is admissible for one sign of $\mu$.


\section{Feigin's classification}
\label{sec:classification}
\setcounter{equation}{0}

If the $LR$-cycle is admissible for one sign of $\mu$,
we would like to determine which fixed points it coexists with.
To this end, we let $\sigma_{LL}^+$ denote the number of real eigenvalues of $A_L^2$ that are greater than $1$.
If $1$ is not an eigenvalue of $A_L^2$, then
\begin{equation}
{\rm sgn} \left( \det \left( I-A_L^2 \right) \right) = (-1)^{\sigma_{LL}^+} \;.
\label{eq:sigmaLLplus}
\end{equation}
In view of the simple factorisation $I-A_L^2 = (I-A_L)(I+A_L)$,
by (\ref{eq:sigmaJplus}), (\ref{eq:sigmaJminus}) and (\ref{eq:sigmaLLplus}) we have
\begin{equation}
(-1)^{\sigma_{LL}^+} = (-1)^{\sigma_L^+ + \sigma_L^-} \;.
\label{eq:sigmaLLplusFormula}
\end{equation}

The following theorem summarises the main results of \cite{Fe78}.
All aspects of Theorem \ref{th:FeiginAll} follow from the results of the previous two sections,
except those relating to the quantity $\sigma_{LL}^+ + \sigma_{LR}^+$,
and for a complete proof the reader is referred to \cite{DiBu08,Fe78,DiFe99}.

\begin{theorem}
For the map (\ref{eq:f}), suppose
$\varrho^{\sf T} b \ne 0$,
$1$ is not an eigenvalue of $A_L$, $A_R$ and $A_R A_L$,
and $-1$ is not an eigenvalue of $A_L$ and $A_R$.
\begin{enumerate}
\item
If $\sigma_L^+ + \sigma_R^+$ and $\sigma_L^- + \sigma_R^-$ are even,
then $x^L$ and $x^R$ are admissible for different signs of $\mu$,
and the $LR$-cycle is not admissible for all $\mu \ne 0$.
\item
If $\sigma_L^+ + \sigma_R^+$ is odd and $\sigma_L^- + \sigma_R^-$ is even,
then $x^L$ and $x^R$ are admissible for the same sign of $\mu$,
and the $LR$-cycle is not admissible for all $\mu \ne 0$.
\item
If $\sigma_L^+ + \sigma_R^+$ is even and $\sigma_L^- + \sigma_R^-$ is odd,
then $x^L$ and $x^R$ are admissible for different signs of $\mu$,
and the $LR$-cycle is admissible for one sign of $\mu$.
If $\sigma_{LL}^+ + \sigma_{LR}^+$ is even [odd],
then the $LR$-cycle coexists with $x^R$ [$x^L$].
\item
If $\sigma_L^+ + \sigma_R^+$, $\sigma_L^- + \sigma_R^-$ and $\sigma_{LL}^+ + \sigma_{LR}^+$ are odd,
then $x^L$, $x^R$ and the $LR$-cycle are admissible for the same sign of $\mu$.
\item
If $\sigma_L^+ + \sigma_R^+$ and $\sigma_L^- + \sigma_R^-$ are odd
and $\sigma_{LL}^+ + \sigma_{LR}^+$ is even,
then $x^L$ and $x^R$ are admissible for one sign of $\mu$,
and the $LR$-cycle is admissible for the other sign of $\mu$.
\end{enumerate}
\label{th:FeiginAll}
\end{theorem}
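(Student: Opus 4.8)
The plan is to treat Theorem~\ref{th:FeiginAll} as an exercise in parity bookkeeping built on the three sign formulas already established. The statements about \emph{which} sign of $\mu$ each object is admissible for require nothing new: the claims for $x^L$ and $x^R$ are exactly the discussion following \eqref{eq:sgnsJ} (using $s^L \le 0$ and $s^R \ge 0$), and those for the $LR$-cycle are the discussion following \eqref{eq:sgnsLRsRL} (using $s^{LR} \le 0$ and $s^{RL} \ge 0$), together with Lemma~\ref{le:sLRsRL}. In particular items~1 and~2 are immediate, and the fixed-point parts of items~3--5 are immediate. So the only genuinely new content is, in cases~3--5, pinning down which fixed point(s) an admissible $LR$-cycle coexists with, and the claim is that this is governed by the parity of $\sigma_{LL}^+ + \sigma_{LR}^+$.

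The computation I would carry out is as follows. Convert each admissibility inequality into a statement about $\operatorname{sgn}(\varrho^{\sf T} b \mu)$, which, since $\varrho^{\sf T} b \ne 0$ is a fixed constant, is exactly what encodes the sign of $\mu$. From \eqref{eq:sgnsJ}, $x^R$ is admissible precisely when $\operatorname{sgn}(\varrho^{\sf T} b \mu) = (-1)^{\sigma_R^+}$, and $x^L$ precisely when $\operatorname{sgn}(\varrho^{\sf T} b \mu) = -(-1)^{\sigma_L^+}$, the extra sign arising from ``$s^L \le 0$'' versus ``$s^R \ge 0$''. For the $LR$-cycle, when $\sigma_L^- + \sigma_R^-$ is odd (the only case in which it is admissible) \eqref{eq:sgnsLRsRL} forces $s^{LR}$ and $s^{RL}$ to have opposite signs for $\mu \ne 0$, so exactly one sign of $\mu$ gives $s^{LR} < 0 < s^{RL}$, namely the one with $\operatorname{sgn}(\varrho^{\sf T} b \mu) = -(-1)^{\sigma_R^- + \sigma_{LR}^+} = (-1)^{\sigma_L^- + \sigma_{LR}^+}$. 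Comparing the $LR$-cycle's sign with that of $x^L$ gives: the $LR$-cycle coexists with $x^L$ iff $\sigma_L^+ + \sigma_L^- + \sigma_{LR}^+$ is odd, which by the factorisation identity \eqref{eq:sigmaLLplusFormula} (i.e. $\sigma_{LL}^+ \equiv \sigma_L^+ + \sigma_L^- \pmod 2$) is exactly ``$\sigma_{LL}^+ + \sigma_{LR}^+$ odd''. Comparing with $x^R$ and substituting $\sigma_R^+ + \sigma_R^- \equiv (\sigma_L^+ + \sigma_R^+) + (\sigma_L^- + \sigma_R^-) + \sigma_{LL}^+ \pmod 2$ shows that, within each of cases~3--5 (where $\sigma_L^- + \sigma_R^-$ is odd and $\sigma_L^+ + \sigma_R^+$ has a prescribed parity), coexistence with $x^R$ likewise reduces to a condition on the parity of $\sigma_{LL}^+ + \sigma_{LR}^+$. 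Assembling these: in case~3 ($\sigma_L^+ + \sigma_R^+$ even) the $LR$-cycle coexists with $x^L$ or $x^R$ according as $\sigma_{LL}^+ + \sigma_{LR}^+$ is odd or even; in cases~4 and~5 ($\sigma_L^+ + \sigma_R^+$ odd, so $x^L$ and $x^R$ share a sign of $\mu$) the $LR$-cycle coexists with both of them when $\sigma_{LL}^+ + \sigma_{LR}^+$ is odd (case~4) and with neither when it is even (case~5). This reproduces all five items.

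There is no deep obstacle here; the step demanding the most care is the translation of the admissibility inequalities into sign conditions, because ``$s^{LR} \le 0$'' and ``$s^{RL} \ge 0$'' are not symmetric --- they involve $\sigma_R^-$ and $\sigma_L^-$ respectively --- and because the ``$\le$'' versus ``$\ge$'' direction contributes an extra factor of $-1$ in the $x^L$ and $s^{LR}$ cases. A single mislaid sign interchanges the conclusions of cases~4 and~5. The one ingredient that is not purely mechanical is the identity \eqref{eq:sigmaLLplusFormula}, coming from $I - A_L^2 = (I - A_L)(I + A_L)$, which is what makes the final answer come out in terms of $\sigma_{LL}^+$ rather than the raw eigenvalue counts $\sigma_L^\pm$, $\sigma_R^\pm$.
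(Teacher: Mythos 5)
Your proposal is correct, and it is worth noting that it does slightly more than the paper itself: the paper does not prove the coexistence clauses of Theorem~\ref{th:FeiginAll}, stating only that everything except the statements involving $\sigma_{LL}^+ + \sigma_{LR}^+$ follows from \S\ref{sec:fixedPoints}--\S\ref{sec:periodTwo} and deferring the rest to \cite{DiBu08,Fe78,DiFe99}. For items i and ii and for the admissibility parts of items iii--v you use exactly the paper's route, namely the sign formulas (\ref{eq:sgnsJ}) and (\ref{eq:sgnsLRsRL}) together with Lemma~\ref{le:sLRsRL}. For the clauses involving $\sigma_{LL}^+ + \sigma_{LR}^+$ you supply a short self-contained argument in the paper's own notation: converting each admissibility inequality into a condition on ${\rm sgn}(\varrho^{\sf T} b \mu)$ (with the correct extra factor $-1$ coming from the directions $s^L \le 0$ and $s^{LR} \le 0$), using that $\sigma_L^- + \sigma_R^-$ odd forces $s^{LR}$ and $s^{RL}$ to have opposite signs, and then identifying $\sigma_L^+ + \sigma_L^-$ with $\sigma_{LL}^+$ modulo $2$ via (\ref{eq:sigmaLLplusFormula}). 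I checked the parity bookkeeping: coexistence with $x^L$ indeed reduces to $\sigma_{LL}^+ + \sigma_{LR}^+$ odd, and the reduction of the $x^R$ comparison via $\sigma_R^+ + \sigma_R^- \equiv (\sigma_L^+ + \sigma_R^+) + (\sigma_L^- + \sigma_R^-) + \sigma_{LL}^+ \pmod 2$ is right, so cases iii--v come out exactly as stated (with item v read, as it must be, as a conditional that Theorem~\ref{th:caseFive} later shows is vacuous). The only cost of your approach relative to the paper's is length on the page; what it buys is that the classification no longer leans on external references for the $\sigma_{LL}^+ + \sigma_{LR}^+$ clauses, and it makes transparent exactly where a misplaced sign would interchange cases iv and v.
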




\section{The $LR$-cycle coexists with at least one fixed point}
\label{sec:impossibility}
\setcounter{equation}{0}

As a consequence of the following theorem, which is the main result of this Letter,
if $\sigma_L^+ + \sigma_R^+$ and $\sigma_L^- + \sigma_R^-$ are odd,
then $\sigma_{LL}^+ + \sigma_{LR}^+$ is also odd.
Therefore, scenario (v) of Theorem \ref{th:FeiginAll} cannot occur.

\begin{theorem}
Suppose $1$ is not an eigenvalue of $A_L$, $A_R$ and $A_R A_L$,
and $-1$ is not an eigenvalue of $A_L$ and $A_R$.
Suppose $(-1)^{\sigma_L^+ + \sigma_R^+} = (-1)^{\sigma_L^- + \sigma_R^-}$.
Then $(-1)^{\sigma_{LL}^+ + \sigma_{LR}^+} = (-1)^{\sigma_L^+ + \sigma_R^+}$.
\label{th:caseFive}
\end{theorem}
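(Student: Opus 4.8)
The plan is to reduce the statement to one exact determinant identity and then argue with signs. Throughout, write $p_J=\det(I-A_J)$ and $q_J=\det(I+A_J)$ for $J\in\{L,R\}$; by the hypotheses on the eigenvalues $\pm1$ all four of these are nonzero, and by (\ref{eq:sigmaJplus}) and (\ref{eq:sigmaJminus}) we have ${\rm sgn}(p_J)=(-1)^{\sigma_J^+}$ and ${\rm sgn}(q_J)=(-1)^{\sigma_J^-}$. From $I-A_L^2=(I-A_L)(I+A_L)$ and (\ref{eq:sigmaLLplus}) we get $(-1)^{\sigma_{LL}^+}={\rm sgn}(p_Lq_L)$, and the whole proof hinges on showing
\[
\det(I-A_RA_L)=\tfrac12\bigl(p_Rq_L+q_Rp_L\bigr),
\]
which by (\ref{eq:sigmaLRplus}) would give $(-1)^{\sigma_{LR}^+}={\rm sgn}(p_Rq_L+q_Rp_L)$.

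To prove this identity I would exploit the rank-one relation (\ref{eq:xi}), i.e. $\xi e_1^{\sf T}=A_R-A_L$. Expanding the products, one checks directly that
\[
(I-A_R)(I+A_L)+\xi e_1^{\sf T}=I-A_RA_L=(I+A_R)(I-A_L)-\xi e_1^{\sf T}.
\]
Writing $X=(I-A_R)(I+A_L)$, this says $I-A_RA_L=X+\xi e_1^{\sf T}$ while $(I+A_R)(I-A_L)=X+2\xi e_1^{\sf T}$. Applying the rank-one expansion $\det(M+uv^{\sf T})=\det M+v^{\sf T}{\rm adj}(M)u$ with $M=X$, $u=\xi$, $v=e_1$ gives $\det(I-A_RA_L)=\det X+e_1^{\sf T}{\rm adj}(X)\xi$ and $\det\bigl((I+A_R)(I-A_L)\bigr)=\det X+2\,e_1^{\sf T}{\rm adj}(X)\xi$, so $\det X+\det\bigl((I+A_R)(I-A_L)\bigr)=2\det X+2\,e_1^{\sf T}{\rm adj}(X)\xi=2\det(I-A_RA_L)$; since $\det X=p_Rq_L$ and $\det\bigl((I+A_R)(I-A_L)\bigr)=q_Rp_L$, the identity follows. (Since $X$ is invertible here one could equivalently use the matrix determinant lemma.) This step — spotting the right rank-one factorisation of $I-A_RA_L$ — is the only part requiring any ingenuity, and I expect it to be the main obstacle, with everything else reducing to sign arithmetic.

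It then remains to show that $(-1)^{\sigma_{LL}^++\sigma_{LR}^+}={\rm sgn}(p_Lq_L)\,{\rm sgn}(p_Rq_L+q_Rp_L)$ equals $(-1)^{\sigma_L^++\sigma_R^+}={\rm sgn}(p_Lp_R)$. Multiplying through by ${\rm sgn}(p_Lq_L)\in\{-1,1\}$, this is equivalent to ${\rm sgn}(p_Rq_L+q_Rp_L)={\rm sgn}(p_Rq_L)$. The hypothesis $(-1)^{\sigma_L^++\sigma_R^+}=(-1)^{\sigma_L^-+\sigma_R^-}$ is exactly ${\rm sgn}(p_Lp_R)={\rm sgn}(q_Lq_R)$, hence $p_Lp_Rq_Lq_R>0$, hence $(p_Rq_L)(q_Rp_L)>0$: the nonzero reals $p_Rq_L$ and $q_Rp_L$ share a sign. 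Their sum is then nonzero with that same sign, so ${\rm sgn}(p_Rq_L+q_Rp_L)={\rm sgn}(p_Rq_L)$, completing the proof.
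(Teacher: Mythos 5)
Your proof is correct and follows essentially the same route as the paper: both rest on the rank-one factorisations $(I+A_R)(I-A_L)=I-A_RA_L+\xi e_1^{\sf T}$ and $(I-A_R)(I+A_L)=I-A_RA_L-\xi e_1^{\sf T}$ combined with the matrix determinant lemma, and then on the observation that the hypothesis forces $\det(I+A_R)\det(I-A_L)$ and $\det(I-A_R)\det(I+A_L)$ to share a sign, which must therefore be the sign of $\det(I-A_RA_L)$, after which the parity bookkeeping with $(-1)^{\sigma_{LL}^+}={\rm sgn}\left(\det(I-A_L)\det(I+A_L)\right)$ is identical. Your identity $\det(I-A_RA_L)=\tfrac12\left(\det(I-A_R)\det(I+A_L)+\det(I+A_R)\det(I-A_L)\right)$ is simply the sum of the two relations underlying Lemma \ref{le:crossFormula}, so it is a slightly more compact packaging of the same argument rather than a different one.
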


The key feature of the proof of Theorem \ref{th:caseFive}, given below,
is that we look closely at $(-1)^{\sigma_L^+ + \sigma_R^-}$ and $(-1)^{\sigma_L^- + \sigma_R^+}$,
rather than $(-1)^{\sigma_L^+ + \sigma_R^+}$ and $(-1)^{\sigma_L^- + \sigma_R^-}$,
as the first two quantities admit a convenient algebraic manipulation.
We begin with the following lemma.

\begin{lemma}
Suppose $1$ and $-1$ are not eigenvalues of $A_L$ and $A_R$.
Then
\begin{align}
(-1)^{\sigma_L^+ + \sigma_R^-} &=
{\rm sgn} \left( \det(I-A_R A_L) +
e_1^{\sf T} {\rm adj}(I-A_R A_L) \xi \right) \;, \label{eq:crossFormulaPlus} \\
(-1)^{\sigma_L^- + \sigma_R^+} &=
{\rm sgn} \left( \det(I-A_R A_L) -
e_1^{\sf T} {\rm adj}(I-A_R A_L) \xi \right) \;, \label{eq:crossFormulaMinus}
\end{align}
where $\xi$ is given by (\ref{eq:xi}). 
\label{le:crossFormula}
\end{lemma}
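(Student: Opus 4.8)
The plan is to express both sides in terms of a single determinant that interpolates between the relevant matrices. The key algebraic observation is that $A_R A_L$ and $A_L A_R$ are related by a rank-one perturbation: since $A_R = A_L + \xi e_1^{\sf T}$ (equation (\ref{eq:xi})), one computes $A_R A_L - A_L A_R = \xi e_1^{\sf T} A_L - A_L \xi e_1^{\sf T}$, which is not obviously rank one, so instead I would work with the factorizations directly. The cleaner route: note that $\det(I - A_L A_R) = \det(I - A_R A_L)$ always, and that $I - A_R A_L$ and $I - A_L A_R$ differ in a controlled way. Actually the most promising identity is to consider the matrix $I - A_R A_L$ versus the product $(I - A_R)(I + A_L)$ or a similar factorization; using $A_R = A_L + \xi e_1^{\sf T}$ we can write $I - A_R A_L = I - A_L^2 - \xi e_1^{\sf T} A_L = (I - A_L)(I + A_L) - \xi e_1^{\sf T} A_L$. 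The first step is therefore to find the right rank-one decomposition so that the matrix determinant lemma applies.

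Concretely, I would proceed as follows. First, using $A_R = A_L + \xi e_1^{\sf T}$ and the relation $s = e_1^{\sf T} x$, rewrite $\det(I - A_R A_L) \pm e_1^{\sf T}\,{\rm adj}(I - A_R A_L)\,\xi$ via the matrix determinant lemma as $\det(I - A_R A_L \mp \xi e_1^{\sf T})$ — that is, identify the combination $\det(M) + v^{\sf T}{\rm adj}(M) w = \det(M + w v^{\sf T})$ (when $M$ is invertible this is immediate from $\det(M + wv^{\sf T}) = \det(M)(1 + v^{\sf T}M^{-1}w)$; by continuity it holds in general). So the right-hand side of (\ref{eq:crossFormulaPlus}) is ${\rm sgn}\,\det(I - A_R A_L + \xi e_1^{\sf T})$ and of (\ref{eq:crossFormulaMinus}) is ${\rm sgn}\,\det(I - A_R A_L - \xi e_1^{\sf T})$ (up to getting the signs of $\xi$ right). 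Second, I would show that $I - A_R A_L + \xi e_1^{\sf T} A_L$ and $I - A_R A_L - \xi e_1^{\sf T} \cdot(\text{something})$ factor as products of the matrices $I \pm A_L$, $I \pm A_R$ in an order that produces the determinants $\det(I - A_L)\det(I + A_R)$ and $\det(I + A_L)\det(I - A_R)$ respectively. The cleanest candidate: since $A_R A_L = (A_L + \xi e_1^{\sf T})A_L$, we have $I - A_R A_L = I - A_L A_L - \xi e_1^{\sf T} A_L$; meanwhile $(I - A_R)(I + A_L) = I + A_L - A_R - A_R A_L = I - A_R A_L + A_L - A_R = I - A_R A_L - \xi e_1^{\sf T}$. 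Hence $\det(I - A_R A_L - \xi e_1^{\sf T}) = \det(I - A_R)\det(I + A_L)$, whose sign is $(-1)^{\sigma_R^+}(-1)^{\sigma_L^-} = (-1)^{\sigma_L^- + \sigma_R^+}$ by (\ref{eq:sigmaJplus}) and (\ref{eq:sigmaJminus}). This gives (\ref{eq:crossFormulaMinus}) directly once the bookkeeping with $\xi$ is matched.

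For (\ref{eq:crossFormulaPlus}), I expect the symmetric computation: $(I + A_R)(I - A_L) = I - A_L + A_R - A_R A_L = I - A_R A_L + \xi e_1^{\sf T}$, so $\det(I - A_R A_L + \xi e_1^{\sf T}) = \det(I + A_R)\det(I - A_L)$, with sign $(-1)^{\sigma_R^-}(-1)^{\sigma_L^+} = (-1)^{\sigma_L^+ + \sigma_R^-}$. So both identities reduce to these two elementary factorizations of $I - A_R A_L \pm \xi e_1^{\sf T}$ combined with the matrix determinant lemma to re-express $\det(I - A_R A_L \pm \xi e_1^{\sf T})$ in the adjugate form stated in the lemma. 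The main obstacle I anticipate is purely a sign/orientation issue: getting the $\pm\xi$ versus $\mp\xi$ correspondences exactly right between the two displayed formulas and verifying that $e_1^{\sf T}{\rm adj}(I - A_R A_L)\xi$ is genuinely the right scalar (as opposed to, say, picking up an extra factor from $e_1^{\sf T} A_L$ versus $e_1^{\sf T}$); this requires care but no deep idea. A secondary point is handling the non-invertible case of $I - A_R A_L$, where the matrix determinant lemma in the form $\det(M + wv^{\sf T}) = \det M + v^{\sf T}{\rm adj}(M) w$ still holds as a polynomial identity, so the formulas extend by continuity (and in any case the hypotheses of the lemma we will apply it in guarantee invertibility).
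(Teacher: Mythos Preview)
Your proposal is correct and follows essentially the same approach as the paper: the paper likewise uses the factorizations $(I+A_R)(I-A_L)=I-A_R A_L+\xi e_1^{\sf T}$ and $(I-A_R)(I+A_L)=I-A_R A_L-\xi e_1^{\sf T}$, applies the matrix determinant lemma $\det(A+pq^{\sf T})=\det(A)+q^{\sf T}{\rm adj}(A)p$, and reads off the signs via (\ref{eq:sigmaJplus}) and (\ref{eq:sigmaJminus}). The only cosmetic difference is direction of exposition---the paper starts from $(-1)^{\sigma_L^+ + \sigma_R^-}$ and arrives at the adjugate expression, whereas you start from the adjugate expression and arrive at the sign---and your explicit continuity remark for the non-invertible case is a welcome addition.
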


\begin{proof}[Proof of Lemma \ref{le:crossFormula}]
For clarity we derive only (\ref{eq:crossFormulaPlus}).
Equation (\ref{eq:crossFormulaMinus}) results from switching signs in the following arguments.
By (\ref{eq:sigmaJplus}) and (\ref{eq:sigmaJminus}),
\begin{equation}
(-1)^{\sigma_L^+ + \sigma_R^-} = {\rm sgn} \left( \det(I+A_R) \det(I-A_L) \right) \;.
\label{eq:crossFproof1}
\end{equation}
We can use (\ref{eq:xi}) to write
\begin{equation}
(I+A_R)(I-A_L) = I - A_R A_L + \xi e_1^{\sf T} \;.
\label{eq:crossFproof2}
\end{equation}
Next we recall the {\em matrix determinant lemma} \cite{Be05}:
$\det(A + p q^{\sf T}) \equiv \det(A) + q^{\sf T} {\rm adj}(A) p$,
for any $N \times N$ matrix $A$, and $p,q \in \mathbb{R}^N$.
By applying this result to the right-hand side of (\ref{eq:crossFproof2}),
from (\ref{eq:crossFproof1}) we obtain (\ref{eq:crossFormulaPlus}).
\end{proof}

\begin{proof}[Proof of Theorem \ref{th:caseFive}]
By assumption $(-1)^{\sigma_L^+ + \sigma_R^+} = (-1)^{\sigma_L^- + \sigma_R^-}$,
therefore $(-1)^{\sigma_L^+ + \sigma_R^+ + \sigma_L^- + \sigma_R^-} = 1$,
and therefore we have $(-1)^{\sigma_L^+ + \sigma_R^-} = (-1)^{\sigma_L^- + \sigma_R^+}$.
By Lemma \ref{le:crossFormula}, we have
$\left| \det(I-A_R A_L) \right| > \left| e_1^{\sf T} {\rm adj}(I-A_R A_L) \xi \right|$,
and ${\rm sgn} \left( \det(I-A_R A_L) \right) = (-1)^{\sigma_L^- + \sigma_R^+}$.
By (\ref{eq:sigmaLRplus}) we can rewrite this last equation as
\begin{equation}
(-1)^{\sigma_{LR}^+} = (-1)^{\sigma_L^- + \sigma_R^+} \;.
\label{eq:caseFproof1}
\end{equation}
By multiplying (\ref{eq:sigmaLLplusFormula}) and (\ref{eq:caseFproof1}) together, we obtain
$(-1)^{\sigma_{LL}^+ + \sigma_{LR}^+} =
(-1)^{\sigma_L^+ + 2 \sigma_L^- + \sigma_R^+} =
(-1)^{\sigma_L^+ + \sigma_R^+}$, as required.
\end{proof}

\section{Discussion}
\label{sec:conc}
\setcounter{equation}{0}

By Theorems \ref{th:FeiginAll} and \ref{th:caseFive},
the existence and relative coexistence of $x^L$, $x^R$ and the $LR$-cycle 
near generic border-collision bifurcations is almost completely determined by the
even/odd parity of $\sigma_L^+ + \sigma_R^+$ and $\sigma_L^- + \sigma_R^-$.
We only need to evaluate $\sigma_{LL}^+ + \sigma_{LR}^+$
if we wish to identify which fixed point the $LR$-cycle coexists with in scenario (iii) of Theorem \ref{th:FeiginAll}.
Scenario (v) of Theorem \ref{th:FeiginAll} cannot occur in view of Theorem \ref{th:caseFive},
which was proved by using algebraic arguments to demonstrate that the
particular combination of eigenvalue conditions required for scenario (v) cannot be satisfied.

The stability of $x^L$, $x^R$ and the $LR$-cycle was not discussed here, refer to \cite{DiBu08,Fe78,DiFe99}.
In brief, $x^L$, $x^R$ and the $LR$-cycle are attracting if and only if
all eigenvalues of $A_L$, $A_R$ and $A_R A_L$, respectively, have modulus less than $1$.
Stability therefore relates directly to the various $\sigma$'s defined above,
and Theorem \ref{th:FeiginAll} can be used to show
that for any $\mu \in \mathbb{R}$, at most one fixed point or period-two solution can be attracting.

The admissibility of periodic solutions of (\ref{eq:f}) with period greater than two
cannot be characterised as simply as for fixed points and period-two solutions.
For instance, a generic $LLR$-cycle 
is admissible for one sign of $\mu$ if and only if
\begin{equation}
{\rm sgn} \left( \det(I + A_R + A_R A_L) \right) =
{\rm sgn} \left( \det(I + A_L + A_L A_R) \right) \ne
{\rm sgn} \left( \det \left( I + A_L + A_L^2 \right) \right) \;,
\label{eq:llr}
\end{equation}
see \cite{Si10,SiMe10},
and it is not clear how to relate the quantities in (\ref{eq:llr}) to the eigenvalues of $A_L$ and $A_R$.


{\footnotesize

}

\end{document}